\DeclareMathOperator{\id}{\operatorname{id}}
\DeclareMathOperator{\End}{\operatorname{End}}
\DeclareMathOperator{\Ker}{\operatorname{Ker}}
\renewcommand{\setminus}{\smallsetminus}
\renewcommand{\epsilon}{\varepsilon}
\def\R{\mathbb{R}}
\def\C{\mathbb{C}}
\def\N{\mathbb{N}}
\def\calD{\mathcal{D}}
\def\calQ{\mathcal{Q}}
\def\calV{\mathcal{V}}
\theoremstyle{definition}
\theoremstyle{plain}
\newtheorem{theo}{Theorem}
\newtheorem{prop}[theo]{Proposition}
\newtheorem{lemma}[theo]{Lemma}
\theoremstyle{plain}
\theoremstyle{remark}
\newtheorem{Rems}{Remarks}[section]
\newtheorem{Rem}[Rems]{Remark}
\title{A Convex-Analytical Proof of the Fundamental Theorem of Algebra}
\author{Cl\'ement de Seguins Pazzis\footnote{Universit\'e de Versailles Saint-Quentin-en-Yvelines, Laboratoire de Math\'ematiques
de Versailles, 45 avenue des \'Etats-Unis, 78035 Versailles cedex, France}
\footnote{e-mail address: clement.de-seguins-pazzis@ac-versailles.fr}}
\begin{document}

\thispagestyle{plain}


\maketitle
\begin{abstract}
A weak version of Birkhoff's generalization of the Perron-Frobenius theorem
states that every endomorphism of a finite-dimensional real vector that leaves invariant a non-degenerate closed convex cone
has an eigenvector in that cone.

Here, we show that this theorem, whose proof relies only upon basic convex analysis, yields very short proofs of both the spectral theorem for selfadjoint operators
of Euclidean spaces and the Fundamental Theorem of Algebra.
\end{abstract}

\vskip 2mm
\noindent
\emph{AMS MSC: 15A18; 15B48; 47L07}

\vskip 2mm
\noindent
\emph{Keywords:} Cones, Convex Analysis, Fundamental Theorem of Algebra, Quadratic forms

Let $E$ be a finite-dimensional real vector space.
We define a \textbf{cone} $C$ as a closed subset of $E$ that contains the zero vector and is stable under addition of vectors and under multiplication by positive scalars.
In particular, every cone is convex. The cone $C$ is called \textbf{nondegenerate} if $C \cap (-C)=\{0\}$.
Finally, we say that $C$ is \textbf{nonzero} if $C$ contains a nonzero vector.

A classical result of Birkhoff generalizes weak forms of the Perron-Frobenius theorem as follows:

\begin{theo}[Weak form of Birkhoff's theorem \cite{Birkhoff}]
Let $C$ be a nonzero nondegenerate cone of a finite-dimensional real vector space $E$.
Let $u$ be an endomorphism of $E$ such that $u(C) \subseteq C$.
Then $u$ has an eigenvector in $C$, with nonnegative associated eigenvalue.
\end{theo}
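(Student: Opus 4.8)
My plan is to normalise $u$ into a continuous self-map of a compact convex set and then invoke Brouwer's fixed-point theorem. As a harmless preliminary reduction one may replace $E$ by the $u$-invariant subspace $\Vect(C)$ and so assume that $C$ has nonempty interior, though this is not strictly needed.

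The convex-analytic heart of the matter is the construction of a linear form $\varphi\in E^{\ast}$ that is \emph{strictly positive on }$C$, meaning $\varphi(x)>0$ for every $x\in C\setminus\{0\}$. I would simply take $\varphi$ in the interior of the dual cone $C^{\vee}:=\{\psi\in E^{\ast}:\ \psi(x)\ge 0\ \text{for all}\ x\in C\}$. That $C^{\vee}$ has nonempty interior is where nondegeneracy enters: were $C^{\vee}$ contained in a hyperplane $\{\psi:\ \psi(v)=0\}$ with $v\ne 0$, the bipolar theorem ($(C^{\vee})^{\vee}=C$, valid because $C$ is a closed convex cone) would yield $v,-v\in C$, contradicting $C\cap(-C)=\{0\}$; and a similar application of the bipolar theorem shows that any interior point of $C^{\vee}$ is strictly positive on $C$. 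Having fixed such a $\varphi$, the set $B:=\{x\in C:\ \varphi(x)=1\}$ is nonempty (since $C\ne\{0\}$), convex, and compact: it is plainly closed, and it is bounded because a sequence in $B$ running off to infinity would, after renormalisation, have a subsequence converging to some $y\in C\setminus\{0\}$ with $\varphi(y)=0$, which is impossible.

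It remains to extract the eigenvector. If $u(x)=0$ for some $x\in B$, then $x\in C\setminus\{0\}$ is an eigenvector for the eigenvalue $0$ and we are done. Otherwise $u(x)\in C\setminus\{0\}$, hence $\varphi(u(x))>0$, for every $x\in B$, so the map $f\colon B\to B$, $x\mapsto u(x)/\varphi(u(x))$, is a well-defined continuous self-map of the nonempty compact convex set $B$; by Brouwer's theorem it has a fixed point $x_{0}$, and then $u(x_{0})=\lambda x_{0}$ with $x_{0}\in C\setminus\{0\}$ and $\lambda:=\varphi(u(x_{0}))>0$. Nonnegativity of the eigenvalue is automatic: if $u(x)=\lambda x$ with $x\in C\setminus\{0\}$ and $\lambda<0$, then $\lambda x=u(x)\in C$ while $-\lambda x\in C$ as well, forcing $\lambda x\in C\cap(-C)=\{0\}$ and thus $\lambda=0$, a contradiction. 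I expect the one genuinely delicate point to be the construction of $\varphi$, that is, turning the algebraic pointedness condition $C\cap(-C)=\{0\}$ into a globally strictly positive linear functional; this is precisely where the closedness of $C$ and a Hahn--Banach/bipolar separation argument are indispensable. (One could avoid Brouwer by passing to a minimal $u$-invariant subcone, but that only relocates the difficulty, since a fixed-point input is still needed to see that such a minimal cone must be a half-line.)
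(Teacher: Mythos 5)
Your proof is correct, but it takes a genuinely different route from the paper's. You compactify the problem: nondegeneracy plus the bipolar theorem yields a linear form $\varphi$ strictly positive on $C\setminus\{0\}$ (an interior point of the dual cone $C^{\vee}$), the slice $B=\{x\in C:\varphi(x)=1\}$ is a nonempty compact convex base of the cone, and Brouwer's fixed-point theorem applied to $x\mapsto u(x)/\varphi(u(x))$ produces the eigenvector, after disposing of the trivial case where $u$ vanishes somewhere on $B$. All steps check out; the one spot worth tightening is your assertion that interior points of $C^{\vee}$ are strictly positive on $C\setminus\{0\}$ ``by a similar application of the bipolar theorem'' --- the clean argument is a one-line perturbation rather than bipolarity: given $x\in C\setminus\{0\}$, pick $\psi$ with $\psi(x)>0$; then $\varphi-\epsilon\psi\in C^{\vee}$ for small $\epsilon>0$, whence $\varphi(x)\geq\epsilon\psi(x)>0$. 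The paper, by contrast, deliberately avoids any fixed-point theorem: it proves a Krein--Milman-type lemma (every nonzero nondegenerate cone has an extremal vector, by induction on dimension using Hahn--Banach duality), forms the limit cone $C'=\bigcap_{n\geq 0}(\id_E+u)^n(C)$, shows by sphere-compactness that $C'$ is nonzero and that $v=\id_E+u$ maps $C'$ \emph{onto} itself, and then writes an extremal vector $x$ of $C'$ as $x=y+u(y)$ with $u(y)\in\R_+x$, forcing $y$ to be an eigenvector. So your closing parenthetical is not quite right: passing to a minimal (or limiting) invariant subcone does not merely relocate the fixed-point input --- the extremality argument replaces it entirely, which is exactly what lets the paper keep the whole proof within elementary convex analysis (Hahn--Banach separation plus compactness of the unit sphere). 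Your version buys brevity and uses a classical, well-known mechanism, at the cost of importing Brouwer's theorem, a substantially deeper topological ingredient than anything the paper's argument needs.
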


Here, the nonnegativity of the eigenvalue is an obvious consequence of the nondegeneracy of the cone, and the difficult statement is the existence of an eigenvector in $C$ for $u$. Birkhoff's theorem is actually a special case of a much more difficult theorem of Krein and Rutman \cite{KreinRutman}, which deals with cones in Banach spaces and
predates Birkhoff's publication by several years.

Birkhoff's theorem requires that $C$ has nonempty interior in $E$, but it is easy to derive the above formulation from his theorem,
as $C$ always has nonempty interior in its linear span in $E$. The strong form of Birkhoff's theorem asserts that, if in addition $C$ has nonempty interior in $E$ then there exists an eigenvector of $u$ in $C$ whose associated eigenvalue is the spectral radius of $u$: this strong form is however of no interest to us here.
Birkhoff's original proof \cite{Birkhoff} used the complex Jordan canonical form of the operator $u$,
but as we shall see in the appendix of this note it is possible to give a purely convex-analytical proof of the weak version that requires
no knowledge of canonical forms for endomorphisms of a vector space, and even avoids introducing complex numbers at all, just as many proofs of the Perron-Frobenius theorem avoid such things and rely only upon basic topology of finite-dimensional real vector spaces.

Here, we shall apply Birkhoff's theorem in a somewhat unusual way to
derive both the spectral theorem for selfadjoint operators in Euclidean spaces,
and the Fundamental Theorem of Algebra. This method is remarkable in three ways:
it uses a strategy that largely differs from the known ones, it completely avoids
introducing complex numbers at all, and finally it uses Birkhoff's theorem with a smooth cone rather than with the polyhedral cones
the Perron-Frobenius theorem is traditionally associated with.

Let us now state the results we shall derive from Birkhoff's theorem:

\begin{theo}[Weak form of the spectral theorem]
Let $q$ be a definite positive quadratic form on a real vector space $E$ with positive finite dimension $d>0$,
and $u$ be a $q$-selfadjoint endomorphism of $E$. Then $u$ has a (real) eigenvalue.
\end{theo}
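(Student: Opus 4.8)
The plan is to apply Birkhoff's theorem to the (nonpolyhedral, ``smooth'') cone of positive semidefinite forms. Denote also by $q$ the symmetric bilinear form polar to the quadratic form $q$, so that $q$ is symmetric positive definite and $q(u(x),y)=q(x,u(y))$ for all $x,y$. Let $S$ be the space of symmetric bilinear forms on $E$, so $\dim S=d(d+1)/2\geq 1$, and let $C\subseteq S$ be the set of positive semidefinite ones. Then $C$ is a closed convex cone, it is nonzero since $q\in C$, and it is nondegenerate: if $b$ and $-b$ both lie in $C$ then $b(x,x)=0$ for all $x$, hence $b=0$ by polarization. I would then introduce the endomorphism $\Phi$ of $S$ given by $\Phi(b)(x,y):=b(u(x),u(y))$; it is linear and maps $C$ into $C$ since $\Phi(b)(x,x)=b(u(x),u(x))\geq 0$. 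Birkhoff's theorem yields a nonzero $b\in C$ and a real $\lambda\geq 0$ with $b(u(x),u(y))=\lambda\,b(x,y)$ for all $x,y$. From there the argument runs by induction on $d$, the case $d=1$ being trivial since an endomorphism of a line is a scalar.

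Next I would split into cases according to $\Rad(b)=\{x:b(x,y)=0\ \forall y\}$, which for a positive semidefinite $b$ equals $\{x:b(x,x)=0\}$ by Cauchy--Schwarz, and which is a proper subspace because $b\neq 0$. If $\lambda=0$, then $b$ vanishes on $\im u\times\im u$, so $\im u\subseteq\Rad(b)\subsetneq E$, $u$ is not onto, and $0$ is an eigenvalue. If $\lambda>0$ and $\Rad(b)\neq\{0\}$, then for $x\in\Rad(b)$ one has $b(u(x),u(x))=\lambda b(x,x)=0$, so $\Rad(b)$ is a proper nonzero $u$-invariant subspace; the restriction of $u$ to it is selfadjoint for the (still positive definite) restriction of $q$, hence has a real eigenvalue by the induction hypothesis, and we are done.

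The remaining case, $\lambda>0$ with $b$ positive definite, is the crux. Here I would introduce the endomorphism $s$ of $E$ defined by $q(s(x),y)=b(x,y)$: it is $q$-selfadjoint, $q$-positive definite, hence invertible, and, using the $q$-selfadjointness of $u$, the relation $\Phi(b)=\lambda b$ becomes $u\circ s\circ u=\lambda\,s$ (and $u$ is invertible, since $u(x_0)=0$ would give $\lambda b(x_0,y)=b(u(x_0),u(y))=0$ for all $y$, so $x_0=0$). Thus $s$ conjugates $u$ to $\lambda u^{-1}$. Now set $\alpha:=\max_{x\neq 0}b(x,x)/q(x,x)$, which exists by compactness of the $q$-unit sphere and is $>0$. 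Then $\alpha q-b$ is positive semidefinite and $F:=\Rad(\alpha q-b)=\{x:s(x)=\alpha x\}$ is nonzero, as it contains any maximizer. If $F=E$ then $b=\alpha q$, so $q(u^2(x),y)=q(u(x),u(y))=\lambda q(x,y)$ forces $u^2=\lambda\,\id$; then either $u=\sqrt\lambda\,\id$ (eigenvalue $\sqrt\lambda$) or, for a suitable $x$, $(u-\sqrt\lambda\,\id)(x)$ is a nonzero eigenvector of $u$ for the eigenvalue $-\sqrt\lambda$. If $F\neq E$, then $F$ is a proper nonzero subspace and I would check it is $u$-invariant: from $u\circ s\circ u=\lambda s$ one gets $s\circ u=\lambda\,u^{-1}\circ s$ and $s\circ u^{-1}=\lambda^{-1}u\circ s$, whence for $v\in F$ a short computation gives $s^2(u(v))=\alpha^2 u(v)$; since $s+\alpha\,\id$ is invertible (it is $q$-positive definite), this yields $s(u(v))=\alpha\,u(v)$, i.e. $u(v)\in F$. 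Then $u|_F$ is selfadjoint for the positive definite $q|_F$ and $0<\dim F<d$, so the induction hypothesis applies and finishes the proof.

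I expect the positive definite case to be the real obstacle: the purely cone-theoretic output $b(u(x),u(y))=\lambda b(x,y)$ only says that $u$ is a similarity for the Euclidean structure $b$, which is by itself compatible with $u$ having no real eigenvalue (think of a scaled rotation). The extra ingredient that breaks this is a genuine compactness fact from Euclidean geometry — attainment of an extremum of the Rayleigh quotient $b/q$ — which manufactures the proper invariant subspace needed to close the induction; everything else (the reductions via radicals, the bookkeeping with $s$) is routine.
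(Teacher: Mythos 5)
Your proof is correct, but it takes a genuinely different route from the paper, and the difference is instructive. The paper does not apply Birkhoff's theorem to the full cone of positive semidefinite forms: it first restricts to the subspace $\calQ_u(E)$ of quadratic forms for which $u$ is selfadjoint (checking that $q'\mapsto q'\circ u$ preserves this subspace), and applies Birkhoff to the cone $\calQ^+(E)\cap\calQ_u(E)$, which is nonzero because $q$ lies in it. The payoff is that the Birkhoff eigenform $q'$ is then automatically compatible with $u$: its polar form $B'$ satisfies $B'(u^2(x),y)=B'(u(x),u(y))=\lambda B'(x,y)$, so $u^2-\lambda\id_E$ maps $E$ into $\Rad(B')\neq E$, hence is singular, and $\sqrt\lambda$ or $-\sqrt\lambda$ is an eigenvalue — three lines, no induction, no case analysis. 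You instead apply Birkhoff to the whole cone, so your eigenform $b$ need not make $u$ selfadjoint, and you compensate with an induction on $d$ and a case split on $\Rad(b)$; all your reductions check out (the $u$-invariance of $\Rad(b)$, the relation $u\circ s\circ u=\lambda s$, the invariance of $F=\Ker(s-\alpha\id_E)$ via $s^2(u(v))=\alpha^2u(v)$ and invertibility of $s+\alpha\id_E$). The one thing worth noticing is that your crux — attainment of the maximum of $b(x,x)/q(x,x)$ plus Cauchy--Schwarz to show the maximizer lies in $\Rad(\alpha q-b)=\Ker(s-\alpha\id_E)$ — is the classical variational proof of the spectral theorem in disguise: applied directly to the symmetric form $(x,y)\mapsto q(u(x),y)$ it already produces a real eigenvalue of $u$ with no Birkhoff and no induction at all. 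So your argument is valid, but the convex-analytic input does much less work in it than in the paper, where shrinking the cone to $\calQ_u(E)$ is precisely what lets Birkhoff's theorem carry the entire proof.
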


\begin{theo}[Real form of the Fundamental Theorem of Algebra]
Every irreducible polynomial over the real numbers has degree $1$ or $2$.
\end{theo}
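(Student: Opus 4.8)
The plan is to reduce the statement to a purely real linear-algebraic fact about matrices and then to apply Birkhoff's theorem (Theorem~1) to the cone of positive semidefinite symmetric matrices, which is precisely the ``smooth'' non-polyhedral type of cone mentioned in the introduction. For the reduction, let $P\in\R[X]$ be irreducible, set $n:=\deg P\ge 1$, and introduce the finite-dimensional real vector space $L:=\R[X]/(P)$, of dimension $n$, together with the endomorphism $u$ of $L$ consisting of multiplication by the class of $X$. It suffices to produce a $u$-invariant subspace $W$ with $1\le\dim_{\R}W\le 2$: picking a nonzero $w\in W$, the $\dim W+1$ vectors $w,u(w),\dots,u^{\dim W}(w)$ all lie in $W$ and hence are linearly dependent, so $g(u)(w)=0$ for some nonzero $g\in\R[X]$ with $\deg g\le 2$; writing $w$ as the class of some $f\in\R[X]$ not divisible by $P$ gives $P\mid gf$, then $P\mid g$ by irreducibility, and finally $\deg P\le\deg g\le 2$. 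Everything thus reduces to the following claim: \emph{every endomorphism of a nonzero finite-dimensional real vector space admits an invariant subspace of dimension $1$ or $2$.}

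I would establish this claim by induction on $d:=\dim E$, the cases $d\le 2$ being trivial. Assume $d\ge 3$, fix a Euclidean structure, identify $E=\R^{d}$, and let $A$ be the matrix of $u$. The linear map $\Phi\colon\Mats_{d}(\R)\to\Mats_{d}(\R)$, $S\mapsto ASA^{T}$, sends the cone $\calC$ of positive semidefinite symmetric matrices into itself, and $\calC$ is a nonzero nondegenerate closed convex cone; so by Theorem~1 there exist $S\in\calC\setminus\{0\}$ and $\lambda\ge 0$ with $ASA^{T}=\lambda S$. If $\lambda=0$, then $ASA^{T}=0$, which for $S\succeq 0$ forces $AS=0$, so $A$ vanishes on the nonzero subspace $\im S$ and has a $1$-dimensional invariant subspace. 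If $\lambda>0$ and $S$ is singular, then $\im S=\im(ASA^{T})\subseteq A(\im S)$, whence by a dimension count $V:=\im S$ is an $A$-invariant subspace with $1\le\dim V\le d-1$, and the induction hypothesis applied to $A|_{V}$ concludes.

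It remains to handle the case where $\lambda>0$ and $S$ is positive definite. Equip $E=\R^{d}$ with the Euclidean structure $q(x):=x^{T}S^{-1}x$; a short computation shows that the $q$-adjoint of $A$ is $SA^{T}S^{-1}$, so that $A\cdot(SA^{T}S^{-1})=(ASA^{T})S^{-1}=\lambda\,I$, which means that $O:=\lambda^{-1/2}A$ is a $q$-orthogonal automorphism of $E$. Now I would invoke the weak spectral theorem (Theorem~2): the endomorphism $O+O^{-1}$ is $q$-selfadjoint, hence has an eigenvector $v\ne 0$, say $(O+O^{-1})v=\mu v$; applying $O$ gives $O^{2}v=\mu\,Ov-v$, so $\Vect(v,Ov)$ is an $O$-invariant — equivalently an $A$-invariant — subspace of dimension $1$ or $2$. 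This completes the induction, hence the proof.

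The single genuinely new ingredient, and the step I expect to be the obstacle to discover rather than to verify, is feeding Birkhoff's theorem the operator $S\mapsto ASA^{T}$ on symmetric matrices together with the positive semidefinite cone. Once an eigen-matrix $S\succeq 0$ has been produced, its range is automatically $A$-invariant when $S$ is degenerate, while in the definite case $A$ becomes a scalar multiple of an orthogonal map, which is exactly the configuration the weak spectral theorem is able to resolve. Everything else — the reduction through $\R[X]/(P)$, the elementary rank and orthogonality computations, and the induction bookkeeping — is routine, and, as announced, complex numbers never enter the argument.
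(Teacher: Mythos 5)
Your proof is correct, and its engine is the same as the paper's: applying Birkhoff's theorem to the map $S\mapsto ASA^{T}$ on the cone of positive semidefinite symmetric matrices is the coordinate (transposed) version of the paper's $Q(u)\colon q\mapsto q\circ u$ on $\calQ^{+}(E)$, and the endgame — observing that the Birkhoff eigen-matrix, when definite, makes $\lambda^{-1/2}u$ a $q$-isometry and then applying the weak spectral theorem to $u+u^{-1}$ — is precisely the paper's trick. Where you diverge is in the bookkeeping. The paper avoids your induction and its two degenerate branches by arranging from the outset that $u$ be \emph{irreducible} (restrict multiplication by $t$ on $\R[t]/(p)$ to a minimal nonzero invariant subspace, keeping $p$ as the minimal polynomial). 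Irreducibility then does all the case-elimination for free: the isotropy cone $q^{-1}(0)$ of the Birkhoff eigen-form is a $u$-invariant subspace (Cauchy--Schwarz), so it must be $\{0\}$, forcing $q$ definite and $\lambda>0$; and the eigenspace of $u+u^{-1}$ is $u$-invariant, so it must be all of $E$, whence $u+u^{-1}=\alpha\id$ and $t^{2}-\alpha t+1$ annihilates $u$, giving $\deg p\le 2$ outright. Your route handles the $\lambda=0$ and singular-$S$ branches by extracting a proper invariant subspace (kernel or image of $S$) and recursing, and then closes the loop with a clean polynomial-divisibility argument that plays the role of the minimal-polynomial fact in the paper. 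Both are sound; the paper's version is a bit leaner because irreducibility kills the degenerate cases before they arise, while your version more visibly isolates the reusable statement that every real endomorphism has an invariant subspace of dimension $1$ or $2$.
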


That the real form of the Fundamental Theorem of Algebra implies the complex one is easy, since if we take a nonconstant polynomial $p(t) \in \C[t]$,
then $p(t)\overline{p(t)}$ has real coefficients, and it only remains to observe that every polynomial with degree $2$ and real coefficients has a complex root.

Our proofs of both the above results are based on the following idea: let $E$ be a nonzero finite-dimensional real vector space.
The space $\calQ(E)$ of all quadratic forms on $E$ is finite-dimensional, and we denote by
$\calQ^+(E)$ the subset of all semi-definite positive quadratic forms:
it is very elementary to check that it is a nonzero and nondegenerate cone,
with closeness proved simply by remarking that the evaluation mappings $q \in \calQ(E) \mapsto q(x)$, with $x \in E$, are all continuous
because they are linear and $\calQ(E)$ is finite-dimensional.

The key in both proofs is to scrutinize the action of linear operators on quadratic forms:
to every endomorphism $u \in \End(E)$, we associate the linear mapping
$$Q(u) : q \in \calQ(E) \mapsto q \circ u \in \calQ(E),$$
and a critical yet mundane observation is that $Q(u)$ leaves the cone $\calQ^+(E)$ invariant.

We are now ready for the proofs of the two stated theorems. The proof of the second one will require the validity of the first one.

\begin{proof}[Proof of the weak form of the spectral theorem]
Consider the subspace $\calQ_u(E)$ of all quadratic forms $q'$ on $E$ for which
$u$ is $q'$-selfadjoint. We note that $\calQ_u(E)$ is invariant under $Q(u)$:
indeed, for any $q' \in \calQ_u(E)$, with corresponding symmetric bilinear form $B$, the
symmetric bilinear form which corresponds to $q \circ u$ is $B' : (x,y) \mapsto B(u(x),u(y))$,
and clearly
$$\forall (x,y)\in E^2, \quad B'(u(x),y)=B(u^2(x),u(y))=B(u(x),u^2(y))=B'(x,u(y)).$$
Hence the invariance under $Q(u)$ of the intersection $\calQ^+_u(E):=\calQ^+(E) \cap \calQ_u(E)$, which is obviously a nondegenerate cone in $\calQ_u(E)$, being
the intersection of a nondegenerate cone with a linear subspace; Moreover $\calQ_u^+(E)$ is nonzero because we have assumed that $u$ is selfadjoint for some
definite positive quadratic form on $E$.

By Birkhoff's theorem there is an eigenvector $q'$ for $q'' \mapsto q'' \circ u$ in the cone $\calQ^+_u(E)$, with nonnegative associated eigenvalue:
hence $q'$ is a semi-definite positive quadratic form on $E$ and there is a real number $\lambda \geq 0$ such that $q' \circ u=\lambda\,q'$.
In denoting by $B'$ the polar form of $q'$, we deduce that
$$\forall (x,y)\in E^2, \; B'(u^2(x),y)=B'(u(x),u(y))=\lambda B'(x,y),$$
and hence $u^2-\lambda\,\id_E$ has its range included in the radical of $B'$ (i.e.\ the orthogonal of $E$ under $B'$), which differs from $E$.
Hence at least one of $u-\sqrt{\lambda} \id_E$ and $u+\sqrt{\lambda} \id_E$ is not an isomorphism, to the effect that 
at least one of $\sqrt{\lambda}$ and $-\sqrt{\lambda}$ is an eigenvalue of $u$.
\end{proof}

Below is a reformulation of the proof in terms of selfadjoint endomorphisms.

\begin{proof}[Reformulated proof of the weak form of the spectral theorem]
Let us consider $E$ with its structure of Euclidean space inherited from $q$, and denote by $\langle -,-\rangle$
the associated inner product.
This defines a notion of selfadjoint endomorphisms of $E$, and of semi-definite positive selfadjoint endomorphisms.
Let us consider the vector space $\calV$ of all endomorphisms $f$ of $E$ that are selfadjoint and commute with $u$.
In $\calV$, we have the cone $\calV^+$ of all semi-definite positive endomorphisms. Clearly $\calV^+$ is a non-degenerate convex cone,
and it is nonzero since it contains $\id_E$.

For all $f \in \calV$ we note that $u^2 f=u^\star f u$ is in $\calV$, and if in addition $f \in \calV^+$
then $u^2 f \in \calV^+$ (indeed, for all $x \in E$ we have $\langle x,(u^2 f)(x)\rangle=\langle u(x),f(u(x))\rangle \geq 0$ thanks to the commutation of $u$ with $f$ and to the selfadjointness of $u$).

Thus, we can apply Birkhoff's theorem to the linear mapping $f \in \calV \mapsto u^2 f \in \calV$.
This yields a nonzero element $f \in \calV^+$ and a nonnegative real number $\lambda$ such that $u^2f=\lambda f$.
Thus $u^2-\lambda \id_V$ is non-invertible, and the conclusion is obtained just like in the previous proof.
\end{proof}

\begin{proof}[Proof of the real form of the Fundamental Theorem of Algebra]
Let $p(t) \in \R[t]$ be a real irreducible polynomial with degree $d \geq 1$.
We can find a finite-dimensional real vector space $E$ equipped with an irreducible\footnote{I.e., with no nontrivial invariant subspace.} endomorphism $u$ with minimal polynomial $p(t)$: for example, we take the quotient space $E:=\R[t]/(p(t))$ and the endomorphism $v$ of left multiplication by the coset of $t$;
it is easily seen that $p(t)$ is the minimal polynomial of $v$, and then we can restrict $v$ to an arbitrary minimal nonzero invariant subspace\footnote{In fact it can be proved that $v$ is already irreducible, but this precision is useless here.}.

Birkhoff's theorem applied to the endomorphism $Q(u)$ and the cone $\calQ^+(E)$
yields a nonzero semi-definite positive quadratic form $q$ on $E$ and a real number $\lambda \geq 0$ such that $q \circ u=\lambda\,q$.
The isotropy cone $q^{-1}\{0\}=\{x \in E : q(x)=0\}$ is then invariant under $u$, yet because $q$ is semi-definite positive this cone is actually a linear subspace of $E$: this is a classical consequence of the Cauchy-Schwarz inequality for semi-definite positive quadratic forms, which is essentially the only basic fact of quadratic forms theory that we use in our proof.

By the irreducibility of $u$, we deduce that $q$ is definite positive and that $\lambda>0$.
Replacing $u$ with $\lambda^{-1/2} u$, we can then assume that $q \circ u=q$, i.e.\ $u$ is an isometry for $q$. Now, the trick is to consider the $q$-selfadjoint
endomorphism
$$u_+:=u+u^\star=u+u^{-1}\cdot$$
By the weak form of the spectral theorem, $u_+$ has a real eigenvalue $\alpha$, and we consider the corresponding eigenspace $V$.
Since $u$ obviously commutes with $u_+$, we deduce that $V$ is $u$-invariant, and we conclude that $V=E$ since $u$ is irreducible.
Hence $u_+=\alpha\id_E$, to the effect that $t^2+1-\alpha t$ annihilates $u$. Therefore $d\leq 2$.
\end{proof}

\appendix

\section{Appendix: A convex-analytical proof of the weak Birkhoff theorem}

Since Birkhoff's theorem is at heart a theorem of convex analysis and since we used it to prove
the Fundamental Theorem of Algebra, we feel compelled to give a proof of Birkhoff's theorem that is entirely rooted
in convex analysis and avoids using the complex numbers at all. As we shall see, the proof is quite natural.

\subsection{Basics on cones}

Remember that $E$ denotes a finite-dimensional real vector space.
Throughout, we fix a norm $\|-\|$ on $E$ and denote by $S$ the associated unit sphere. We denote by $\R_+$ the set of all nonnegative real numbers.

Our prerequisites are the basic results on the topology of finite-dimensional real vector spaces:
on such a space, all the norms are equivalent, all the linear maps are continuous, all the bilinear mappings
from a product of finite-dimensional vector spaces are continuous, and the compact subsets are the closed bounded ones.
Finally, we require basic theorems on compact (metric) spaces, such as the descending chain theorem:
the intersection of any non-increasing sequence of nonempty compact spaces is nonempty.

Apart from that, the main key is the Hahn-Banach geometric theorem for closed convex subsets (see e.g.\ theorem 1.7 in \cite{Brezis},
where we take the convex compact subset $B$ limited to a single point):

\begin{theo}[Hahn-Banach separation theorem for closed convex subsets]
Let $X$ be a nonempty closed convex subset of $E$, and let $a \in E \setminus X$. Then there exists a linear form
$\varphi$ on $E$ such that $\forall x \in X, \; \varphi(x) > \varphi(a)$.
\end{theo}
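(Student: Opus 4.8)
The plan is to give the standard finite-dimensional nearest-point argument, which rests only on compactness and a little inner-product algebra; no recourse to the analytic Hahn--Banach extension theorem, nor to any infinite-dimensional input, is needed. Since the conclusion does not involve the fixed norm $\|-\|$ and all norms on $E$ are equivalent, I would first replace $\|-\|$ by the norm associated with some inner product $\langle -,-\rangle$ on $E$ (such an inner product exists because $E$ is finite-dimensional), and set $d := \inf_{x \in X} \|x - a\|$.

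The first key step is to show that this infimum is attained. Fixing an arbitrary $x_0 \in X$, the infimum of $\|x - a\|$ over $X$ coincides with its infimum over $X \cap \{x \in E : \|x - a\| \le \|x_0 - a\|\}$, and the latter set is closed and bounded, hence compact; the continuous function $x \mapsto \|x - a\|$ therefore attains its minimum on it, say at some $p \in X$. Since $X$ is closed and $a \notin X$, we have $d = \|p - a\| > 0$. This existence of the projection point is the only genuinely nontrivial ingredient, and the sole place where the topological hypotheses (closedness of $X$, and the fact that closed bounded subsets of $E$ are compact) intervene; everything that follows is elementary.

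The second step is the variational inequality obtained by perturbing $p$ towards an arbitrary point of $X$. For $x \in X$ and $t \in (0,1]$, convexity gives $p + t(x - p) \in X$, so $\|p - a + t(x - p)\|^2 \ge d^2$; expanding the left-hand side as $d^2 + 2t\,\langle p - a,\, x - p\rangle + t^2\|x - p\|^2$, dividing by $t$, and letting $t \to 0^+$, I obtain $\langle p - a,\, x - p\rangle \ge 0$ for every $x \in X$.

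Finally, I would take $\varphi : y \mapsto \langle p - a,\, y\rangle$, a linear form on $E$. For every $x \in X$ the previous inequality yields
$$\varphi(x) \ge \langle p - a,\, p\rangle = \langle p - a,\, a\rangle + \|p - a\|^2 = \varphi(a) + d^2 > \varphi(a),$$
since $d > 0$, which is exactly the asserted strict separation. The main obstacle, as noted, is the attainment of the distance $d$; once the projection $p$ is in hand, the rest is a routine computation with the inner product.
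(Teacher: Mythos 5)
Your proof is correct and complete. Note, however, that the paper itself does not prove this theorem: it simply cites Theorem 1.7 of Brezis (specialized to the case where the compact convex set is a single point) as a black box. Your nearest-point argument is therefore a genuine addition rather than a variant of the paper's proof. It is also well suited to the spirit of the paper, which emphasizes a self-contained, elementary, finite-dimensional treatment: by fixing an auxiliary inner product and using compactness to obtain the projection $p$ of $a$ onto $X$, you avoid any appeal to the analytic Hahn--Banach extension theorem or to infinite-dimensional functional analysis. The only topological inputs are exactly those the paper already lists as prerequisites (closed bounded sets are compact, continuous functions attain their infimum on compact sets), so your argument could be inserted into the appendix essentially verbatim. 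One very minor stylistic remark: you could observe up front that the conclusion is invariant under replacing $\varphi$ by any positive multiple, so there is no loss of generality in the choice of inner product; you do say the conclusion is norm-independent, which amounts to the same thing.
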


From there, we move to the duality of cones. Let $C$ be a cone of $E$, and denote by $E'$ the dual vector space of $E$.
We define the dual cone $C^\star$ as the subset of all linear forms $\varphi \in E'$ such that
$\forall x \in C, \; \varphi(x) \geq 0$. Because the evaluation mappings $\varphi \mapsto \varphi(x)$ are linear, they are continuous,
and it is then easy to see that $C^\star$ is a cone. Conversely, given a cone $\calD$ of $E'$, we define the predual cone
${}^\star \calD$ as the set of all $x \in E$ such that $\forall \varphi \in \calD, \; \varphi(x) \geq 0$.
Again, it is a cone of $E$. Then we have two key properties:

\begin{prop}
One has ${}^\star(C^\star)=C$ for every cone $C$ of $E$.
\end{prop}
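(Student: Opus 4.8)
The plan is to prove the two inclusions ${}^\star(C^\star) \supseteq C$ and ${}^\star(C^\star) \subseteq C$ separately, the first being immediate from the definitions and the second relying on the Hahn--Banach separation theorem combined with the scaling invariance of $C$.

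First I would dispatch the easy inclusion $C \subseteq {}^\star(C^\star)$: if $x \in C$, then by the very definition of $C^\star$ one has $\varphi(x) \geq 0$ for every $\varphi \in C^\star$, and this is precisely the condition for $x$ to belong to ${}^\star(C^\star)$.

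For the reverse inclusion ${}^\star(C^\star) \subseteq C$, I would argue by contraposition: starting from a vector $a \in E \setminus C$, I want to exhibit a linear form $\varphi \in C^\star$ with $\varphi(a) < 0$, which shows $a \notin {}^\star(C^\star)$. Since $C$ is a nonempty (it contains $0$) closed convex subset of $E$ and $a \notin C$, the Hahn--Banach separation theorem stated above provides a linear form $\varphi$ with $\varphi(x) > \varphi(a)$ for all $x \in C$. Evaluating at $x = 0 \in C$ yields $\varphi(a) < 0$. It then remains to check that $\varphi \in C^\star$, i.e.\ that $\varphi \geq 0$ on $C$: if $\varphi(x_0) < 0$ held for some $x_0 \in C$, then, $tx_0$ lying in $C$ for every $t > 0$ with $\varphi(tx_0) = t\,\varphi(x_0) \to -\infty$, the inequality $\varphi(tx_0) > \varphi(a)$ would fail for $t$ large enough. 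Hence $\varphi \in C^\star$ and $\varphi(a) < 0$, as desired.

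The one point to keep in mind --- hardly an obstacle --- is to use that $C$ is genuinely a cone and not merely a closed convex set: the stability of $C$ under multiplication by positive scalars is exactly what upgrades the strict separation inequality to the homogeneous statement $\varphi \geq 0$ on $C$, while the fact that $0 \in C$ is what fixes the sign of $\varphi(a)$. Apart from that, the argument is a direct application of the separation theorem.
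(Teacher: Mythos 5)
Your proof is correct and follows essentially the same route as the paper: the easy inclusion from the definitions, then Hahn--Banach separation of $a \notin C$ from the closed convex set $C$, using $0 \in C$ to get $\varphi(a) < 0$ and the stability of $C$ under positive scaling to upgrade the separation inequality to $\varphi \geq 0$ on $C$. No gaps.
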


\begin{proof}
The inclusion $C \subseteq {}^\star(C^\star)$ is obvious.

Next, let $a \in E \setminus C$. Since $C$ is a closed convex subset, the Hahn-Banach theorem yields a linear form $\varphi \in E'$
such that $\varphi(x) >\varphi(a)$ for all $x \in C$.
Since $0 \in C$ we deduce that $0>\varphi(a)$. Let $x \in C$. Then $t \varphi(x)=\varphi(tx) \geq \varphi(a)$ for all $t \in \R_+$, which requires that $\varphi(x) \geq 0$.
As $\varphi \in C^\star$ and $\varphi(a)<0$, we deduce that $a\not\in {}^\star(C^\star)$.
\end{proof}

\begin{lemma}\label{interiorlemma}
Let $C$ be a cone of $E$. The set $\{\varphi \in E' : \forall x \in C \setminus \{0\}, \; \varphi(x)>0\}$ is open in $E'$.
\end{lemma}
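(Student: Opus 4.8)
I would prove this by reducing everything to the unit sphere $S$, whose compactness furnishes the uniform estimate needed to produce an open neighborhood. Write $U:=\{\varphi \in E' : \forall x \in C \setminus \{0\}, \; \varphi(x)>0\}$ for the set in question. The degenerate case is immediate: if $C=\{0\}$, the defining condition is vacuous, so $U=E'$ is open; hence I may assume $C \neq \{0\}$, in which case $C \cap S$ is nonempty. It is moreover compact, being closed — as the intersection of the closed sets $C$ and $S$ — and bounded.

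Next I would fix $\varphi_0 \in U$ and look for an explicit ball around it that still lies in $U$. Since every nonzero vector of $C$ is a positive multiple of a vector of $C \cap S$, membership in $U$ is equivalent to positivity on $C \cap S$; so $\varphi_0$ is a continuous function that is everywhere positive on the nonempty compact set $C \cap S$, and therefore attains on it a minimum value $m>0$.

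Equipping $E'$ with the dual norm $\|\varphi\|:=\sup_{x \in S}|\varphi(x)|$ (all norms on the finite-dimensional space $E'$ being equivalent), I claim the open ball of radius $m$ about $\varphi_0$ is contained in $U$. Indeed, if $\|\varphi-\varphi_0\|<m$ and $x \in C \cap S$, then
$$\varphi(x)=\varphi_0(x)+(\varphi-\varphi_0)(x) \geq m-\|\varphi-\varphi_0\|>0,$$
and by homogeneity $\varphi(x)>0$ for every $x \in C \setminus \{0\}$, i.e.\ $\varphi \in U$. This shows $U$ is open.

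I do not expect a real obstacle in this argument; the only step deserving attention is the passage from the pointwise positivity of $\varphi_0$ on $C \setminus \{0\}$ to a strictly positive lower bound $m$, which is precisely where compactness of $C \cap S$ enters — for a non-closed or unbounded cone the relevant infimum could vanish, and then no neighborhood would work.
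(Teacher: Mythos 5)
Your proof is correct, but it takes a different route from the paper's. You prove openness directly: for a fixed $\varphi_0 \in U$ you use compactness of $C \cap S$ to extract a strictly positive minimum $m$ of $\varphi_0$ on $C \cap S$, and then exhibit the dual-norm ball of radius $m$ around $\varphi_0$ as lying inside $U$. The paper instead proves that the complement $X := E' \setminus U$ is closed by a sequential compactness argument: given $f_n \to f$ with $f_n \in X$, it picks normalized witnesses $x_n \in C \cap S$ with $f_n(x_n) \leq 0$, extracts a convergent subsequence $x_{\varphi(n)} \to x_\infty \in C \cap S$ (using that $S$ is compact and $C$ is closed), and passes to the limit in the continuous bilinear pairing to conclude $f(x_\infty) \leq 0$, hence $f \in X$. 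Both arguments hinge on the same fact, that $C \cap S$ is compact (equivalently, the degree-one rescaling that reduces positivity on $C \setminus \{0\}$ to positivity on $C \cap S$), but your version is quantitative — it yields an explicit modulus of openness $m$ — whereas the paper's is qualitative and formulated in terms of sequences. You also handle the vacuous case $C = \{0\}$ separately, which the paper silently absorbs (the sequential argument would break down trying to pick $x_n \in \emptyset$, but then $X = \emptyset$ so there is nothing to prove). Both are valid; yours is arguably slightly more informative.
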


\begin{proof}
Denote by $X$ the complementary subset in $E'$ of the said set. Let us prove that $X$ is closed in $E'$.

Consider a sequence $(f_n)_{n \geq 0}$ in $X$, converging to some $f \in E'$. For every integer $n \geq 0$ there exists a
vector $x_n \in C \setminus \{0\}$ such that $f_n(x_n) \leq 0$. Since $C$ is a cone we can normalize each $x_n$
and reduce the situation to the one where $x_n$ belongs to the unit sphere $S$. Then, since $S$ is compact we can extract a subsequence $(x_{\varphi(n)})$
that converges to some $x_\infty \in S$. Because $C$ is closed we have $x_\infty \in C$, and since the bilinear mapping
$(g,x) \mapsto g(x)$ is continuous we obtain $f(x_\infty) \leq 0$, and hence $f \in X$.
\end{proof}

The last item we need from the theory of cones is the existence of an extremal vector.
Let $C$ be a cone. A vector $x \in C$ is called \textbf{extremal in $C$} when it is nonzero and, for all
$x,y,z$ in $C$, the equality $x=y+z$ implies $y \in \R_+ x$.

\begin{lemma}\label{extremalpointlemma}
Let $C$ be a closed nonzero cone of $E$.
\begin{enumerate}[(i)]
\item If $\dim E>1$ and $C \neq E$ then the boundary $\partial C$ contains a nonzero vector.
\item If $C$ is nondegenerate then it has an extremal vector.
\end{enumerate}
\end{lemma}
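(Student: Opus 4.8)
The plan is to treat (i) via connectedness of the unit sphere and (ii) via a compact convex slice of the cone. For (i): since $C$ is a nonzero closed cone with $C\neq E$, the set $D:=C\cap S$ is a nonempty proper closed subset of $S$, nonempty because $C$ contains a nonzero vector (which we normalise), and proper because $D=S$ would force $C$ to contain every nonzero vector and hence to equal $E$. As $\dim E>1$, the punctured space $E\setminus\{0\}$ is connected, and hence so is its continuous image $S=\{x/\|x\|:x\neq 0\}$; a nonempty proper closed subset of a connected space cannot be clopen, so $D$ has nonempty boundary relative to $S$. Choosing $x$ in that relative boundary gives $x\in D\subseteq C$ with $x\neq 0$, and $x$ cannot be interior to $C$ in $E$: otherwise some $E$-neighbourhood of $x$ would meet $S$ in a subset of $D$, contradicting that $x$ is a relative boundary point of $D$. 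Hence $x\in\partial C\setminus\{0\}$.

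For (ii), the decisive first step is to extract from nondegeneracy a linear form $\varphi\in E'$ with $\varphi(x)>0$ for every $x\in C\setminus\{0\}$. Let $K_0$ be the convex hull of the compact set $C\cap S$; it is compact, and $0\notin K_0$, for if $0=\sum_i\lambda_i x_i$ were a convex combination with $x_i\in C\cap S$ and $\lambda_i>0$, then $\lambda_1 x_1=-\sum_{i\geq 2}\lambda_i x_i$ would lie in $C\cap(-C)=\{0\}$, whence $x_1=0$, absurd. Applying the Hahn-Banach separation theorem to $K_0$ and the point $0$ yields $\varphi\in E'$ positive on $K_0$, hence on $C\cap S$, hence on all of $C\setminus\{0\}$ by positive homogeneity. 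Now slice: $K:=\{x\in C:\varphi(x)=1\}$ is closed, convex, and nonempty (since $v/\varphi(v)\in K$ for any $v\in C\setminus\{0\}$), and it is bounded, because a sequence $(x_n)$ in $K$ with $\|x_n\|\to\infty$ would have $(x_n/\|x_n\|)$ subconverging to some $x_\infty\in C\cap S$ with $\varphi(x_\infty)=\lim 1/\|x_n\|=0$, contradicting the previous sentence. Thus $K$ is a nonempty compact convex subset of a finite-dimensional space and therefore has an extreme point $x$, which one obtains by induction on the dimension of the affine hull of $K$, passing at each step to a proper face $\{y\in K:\psi(y)=\max_K\psi\}$ for a linear form $\psi$ nonconstant on $K$.

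It then remains to check that this $x$ is an extremal vector of $C$. It is nonzero since $\varphi(x)=1$. If $x=y+z$ with $y,z\in C$, then $\varphi(y)+\varphi(z)=1$ with $\varphi(y),\varphi(z)\geq 0$, so $s:=\varphi(y)\in[0,1]$: when $s\in\{0,1\}$ one of $y,z$ vanishes (the functional being positive off $0$) and $y\in\R_+x$ is immediate, while when $s\in(0,1)$ the identity $x=s\,(y/s)+(1-s)\,(z/(1-s))$ exhibits $x$ as a proper convex combination of $y/s$ and $z/(1-s)$, both in $K$, so extremality of $x$ in $K$ forces $y/s=x$, i.e.\ $y=sx\in\R_+x$. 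As a variant, (ii) can also be deduced from (i) by induction on $\dim E$: a one-dimensional nondegenerate nonzero cone is a ray whose generator is extremal, and if $\dim E>1$ then $C\neq E$ by nondegeneracy, so (i) provides a nonzero $x_0\in\partial C$, around which a limit of Hahn-Banach separators for points $a_n\to x_0$ outside $C$ gives $\varphi\in E'\setminus\{0\}$ with $\varphi\geq 0$ on $C$ and $\varphi(x_0)=0$; the cone $C\cap\Ker\varphi$ then lies in a hyperplane, is nonzero and nondegenerate, and any of its extremal vectors is extremal in $C$ by the same splitting argument.

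The step I expect to be the main obstacle is the first step of (ii): turning the seemingly local hypothesis $C\cap(-C)=\{0\}$ into a single linear form strictly positive on all of $C\setminus\{0\}$. This is precisely where nondegeneracy is used, and it is also what makes the slice $K$ compact, which is the crux of the argument; the rest is routine manipulation with convexity and continuity.
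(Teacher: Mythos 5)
Your proof is correct, but both parts take routes genuinely different from the paper's. For (i), the paper chooses a nonzero $x\in C$ and a $y\notin C$ off the line $\R x$, then inspects the closed proper interval $\{t:x+ty\in C\}$ and takes an endpoint $s$, so that $x+sy$ is a nonzero boundary point; you instead argue by connectedness of the unit sphere ($\dim E>1$), observing that $C\cap S$ is a nonempty proper closed subset of $S$, hence not open, hence has a relative boundary point, which is then a boundary point of $C$. Both are short; the paper's version is slightly more hands-on and does not invoke connectedness. For (ii), the contrast is sharper. The paper runs an induction on $\dim E$ through the dual cone $C^\star$: nondegeneracy forces $\{0\}\neq C^\star\neq E'$, part (i) applied to $C^\star$ yields a nonzero $\varphi\in\partial C^\star$, Lemma~\ref{interiorlemma} produces $x\in C\setminus\{0\}$ in $\Ker\varphi$, and one inducts on the lower-dimensional nondegenerate cone $C\cap\Ker\varphi$. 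You instead produce a single linear form $\varphi$ that is \emph{strictly} positive on $C\setminus\{0\}$ by separating $0$ from the compact convex hull of $C\cap S$ (the key point being that nondegeneracy keeps $0$ out of that hull), then slice $C$ along $\{\varphi=1\}$ to a nonempty compact convex set $K$, take an extreme point of $K$, and check it is extremal in $C$. Your approach is more global and conceptually transparent (nondegeneracy manifests directly as the existence of a strictly positive functional), but it silently uses two facts the paper does not: that the convex hull of a compact set in finite dimensions is compact (Carath\'eodory), and that a nonempty compact convex set has an extreme point; you do sketch the latter, but the paper's induction via $C^\star$ and Lemma~\ref{interiorlemma} stays entirely inside its own minimal toolkit. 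Your closing sketch of a variant --- inducting through a supporting hyperplane at a nonzero boundary point obtained from (i), with $\varphi$ recovered as a normalized limit of separators --- is closer in spirit to the paper's induction, though the paper obtains the separating hyperplane by applying (i) to $C^\star$ rather than to $C$, which lets it reuse Lemma~\ref{interiorlemma} instead of a limit of separators.
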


\begin{proof}
The proof of the first point is easy : assume that $\dim E>1$ and $C \neq E$. Choose $x \in C \setminus \{0\}$.
There must exist $y \in E \setminus C$ outside of $\R x$,
otherwise $E \setminus \R x \subseteq C$, and since $C$ is closed this would lead to $C=E$.
Now, choosing $y$ we observe that $I:=\{t \in \R : x+ty \in C\}$ is a closed nonempty interval in $\R$ because $C$ is closed and convex, and it must differ from $\R$
because otherwise $y=\lim_{t \rightarrow +\infty} \frac{1}{t} (x+ty)$ would belong to $C$.
Hence we can pick a real bound $s$ of $I$, and it is clear then that $x+sy \in \partial C$ and
$x+sy \neq 0$.

We prove the second point by induction. The result is obvious if $\dim E=1$, for any nonzero vector of $C$ is then extremal.
Assume now that $\dim E>1$ and that $C$ is nondegenerate (in particular $C \neq E$). By double-duality, this requires that $C^\star \neq \{0\}$ and $C^\star \neq E$.
By the first point, the dual cone $C^\star$ has a nonzero boundary point $\varphi$. Set $H:=\Ker \varphi$.
By Lemma \ref{interiorlemma}, there exists a vector $x \in C \setminus \{0\}$ such that $\varphi(x) \leq 0$, and hence $\varphi(x)=0$. Hence the nondegenerate cone
$C \cap H$ of $H$ is also nonzero, and then by induction there is an extremal vector $x$ in the cone $C \cap H$.
Finally, we note that $x$ is extremal in $C$: let indeed $y,z$ in $C$ be such that $x=y+z$. Then $\varphi(y)+\varphi(z)=\varphi(x)=0$
with $\varphi(y) \geq 0$ and $\varphi(z) \geq 0$, and hence $\varphi(y)=\varphi(z)=0$, leading to $y \in H$ and $z \in H$. We conclude that $y \in \R_+ x$ because $x$ is extremal in $C \cap H$.
\end{proof}

Our last key result on cones deals with intersections. First of all, note that
the intersection of an arbitrary (nonempty) family of cones of $E$ is a cone of $E$.

\begin{lemma}[Descending chain lemma for cones]\label{descendinglemma}
Let $(C_n)_{n \geq 0}$ be a non-increasing sequence of nonzero cones of $E$.
Then $\underset{n \geq 0}{\bigcap} C_n$ is nonzero.
\end{lemma}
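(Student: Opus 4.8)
The plan is to reduce this to the descending chain theorem for compact spaces, which is listed among our prerequisites, by slicing each cone with the unit sphere $S$. First I would set $K_n:=C_n\cap S$ for every $n\geq 0$. Each $K_n$ is nonempty: since $C_n$ is a nonzero cone it contains some nonzero vector $x$, and because a cone is stable under multiplication by positive scalars the normalized vector $\frac{1}{\|x\|}x$ again lies in $C_n$ and, by construction, lies in $S$.

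Next I would check that $(K_n)_{n\geq 0}$ is a non-increasing sequence of nonempty \emph{compact} sets. It is non-increasing because $(C_n)_{n\geq 0}$ is. Each $K_n$ is compact because $C_n$ is closed (being a cone), $S$ is closed and bounded hence compact in the finite-dimensional space $E$, and a closed subset of a compact space is compact. The descending chain theorem for compact spaces then gives $\bigcap_{n\geq 0}K_n\neq\emptyset$.

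Finally I would conclude by choosing a vector $x\in\bigcap_{n\geq 0}K_n$: since $x\in S$ we have $x\neq 0$, and since $x\in C_n$ for every $n$ we get $x\in\bigcap_{n\geq 0}C_n$. Hence $\bigcap_{n\geq 0}C_n$ contains a nonzero vector, as desired. I do not expect any serious obstacle here; the only points requiring a word of care are that the normalization step stays inside $C_n$ (which is exactly the positive-homogeneity of a cone) and that $C_n$ is closed, so that $K_n$ is a closed, and therefore compact, subset of $S$.
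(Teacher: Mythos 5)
Your proof is correct and follows exactly the paper's argument: intersect each $C_n$ with the unit sphere $S$, observe that the resulting sets are nonempty, compact, and non-increasing, apply the descending chain theorem for compact sets, and note that any point of the intersection is a nonzero element of $\bigcap_{n\geq 0}C_n$. The only difference is that you spell out the normalization step and the compactness of $C_n\cap S$ in slightly more detail, which the paper leaves implicit.
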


\begin{proof}
For each $n \geq 0$, the intersection $C_n \cap S$ is a closed subset of $S$, and hence it is compact, besides it is nonempty.
The sequence $(C_n \cap S)_{n \geq 0}$ is non-increasing, and hence its intersection is nonempty. Finally, any point of this intersection is a nonzero element of
$\underset{n \geq 0}{\bigcap} C_n$.
\end{proof}

\subsection{Proof of the weak Birkhoff theorem}

We are almost ready to prove the weak Birkhoff theorem.

\begin{lemma}\label{lemma:nonzerocone}
Let $C$ be a nonzero cone and $u$ be an endomorphism of $E$ such that $\forall x \in C \setminus \{0\}, \; u(x) \neq 0$.
Then $u(C)$ is a nonzero cone.
\end{lemma}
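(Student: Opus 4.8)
The plan is to prove that $u(C)$ is a closed, nonzero cone. The stability properties are immediate: since $C$ is stable under addition and under multiplication by positive scalars, so is its image under the linear map $u$; moreover $0=u(0) \in u(C)$, and $u(C)$ is nonzero because $C$ contains a nonzero vector $x$ and the hypothesis guarantees $u(x) \neq 0$. The only genuinely nontrivial point is closedness.

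For closedness, I would argue as follows. Let $(y_n)_{n \geq 0}$ be a sequence in $u(C)$ converging to some $y \in E$; write $y_n = u(x_n)$ with $x_n \in C$. If $y = 0$ then $y \in u(C)$ and we are done, so assume $y \neq 0$; then we may discard finitely many terms and assume each $y_n \neq 0$, hence each $x_n \neq 0$. The difficulty is that the sequence $(x_n)$ need not be bounded, so I cannot directly extract a convergent subsequence. The remedy is to rescale: set $x_n' := x_n / \|x_n\|$, which lies in $C \cap S$ (using that $C$ is a cone and $x_n \neq 0$). Since $C \cap S$ is compact, pass to a subsequence along which $x_n' \to x_\infty \in C \cap S$; in particular $x_\infty \in C \setminus \{0\}$, so by hypothesis $u(x_\infty) \neq 0$. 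Then $u(x_n') \to u(x_\infty) \neq 0$ by continuity of $u$, which forces $\|u(x_n')\|$ to be bounded away from $0$ for large $n$; consequently $\|x_n\| = \|y_n\| / \|u(x_n')\|$ stays bounded, since $\|y_n\| \to \|y\|$. Now $(x_n)$ is a bounded sequence in the closed set $C$, so some subsequence converges to a point $x_\infty' \in C$, and by continuity $u(x_\infty') = \lim y_n = y$, so $y \in u(C)$.

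The main obstacle, as indicated, is precisely the possible unboundedness of the preimages $x_n$: a naive attempt to extract a limit from $(x_n)$ fails, and one must use the assumption $u(x) \neq 0$ for all $x \in C \setminus \{0\}$ in an essential way to recover boundedness. The argument splits naturally into the trivial case $y = 0$ and the main case $y \neq 0$, and the rescaling onto the sphere $S$ (where compactness is available) followed by a continuity-and-lower-bound argument is the heart of the matter. I expect the write-up to be short, relying only on the compactness of $C \cap S$, the closedness of $C$, and the continuity of $u$ — all of which are among the stated prerequisites.
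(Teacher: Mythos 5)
Your proof is correct and follows essentially the same route as the paper: both reduce closedness to compactness of $S \cap C$, use the hypothesis to keep $u$ bounded away from zero on that compact set, and recover the scaling factor by taking norms (the paper phrases this as closedness of $\R_+ u(S \cap C)$, you phrase it as boundedness of the preimages $x_n$). Your separate treatment of the case $y=0$ is a harmless minor variation.
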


\begin{proof}
It is clear that $u(C)$ contains a nonzero vector and, since $u$ is linear, that it is invariant under addition and multiplication with positive scalars.
The only nontrivial property we must check is that $u(C)$ is closed in $E$.
To see this, note that $C=\R_+ (S \cap C)$ by normalizing, and hence $u(C)=\R_+ u(S \cap C)$ by the linearity of $u$.
The subset $K:=u(S \cap C)$ is compact (since $S \cap C$ is compact, being closed in $S$, and $u$ is continuous) and does not contain $0_E$ because of our assumption on $u$. Now, take sequences $(s_n)_{n \geq 0} \in (\R_+)^\N$ and $(x_n)_{n \geq 0} \in K^\N$ such that
$(s_n x_n)_n$ converges to some $y$ in $E$, and let us prove that $y \in \R_+ K$. By extracting a subsequence, we lose no generality in assuming that
$(x_n)_n$ converges to some $x_\infty \in K$. Then by taking norms we find that $(s_n \|x_n\|)_n$ converges to $\|y\|$, and hence
$(s_n)_n$ converges to $\frac{\|y\|}{\|x_\infty\|}$, which in turn shows that $y=\frac{\|y\|}{\|x_\infty\|} x_\infty \in \R_+ K$.
\end{proof}

Our last key is the following lemma:

\begin{lemma}\label{infiniteintersectionlemma}
Let $C$ be a nonzero cone and $u$ be an endomorphism of $E$ such that $u(C \setminus \{0\}) \subseteq C \setminus \{0\}$.
Then $u^\infty(C)=\underset{n \geq 0}{\bigcap} u^n(C)$ is a nonzero cone and $u(u^\infty(C)))=u^\infty(C)$.
\end{lemma}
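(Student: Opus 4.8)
The plan is to treat the two assertions in turn: first that $u^\infty(C)$ is a nonzero cone, then that $u$ maps it onto itself.

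For the first assertion I would begin with the routine remark that $u(C) \subseteq C$ (because $u(0)=0 \in C$ and $u(C\setminus\{0\}) \subseteq C$), from which $u^{n+1}(C) \subseteq u^n(C)$ follows by applying $u^n$; thus the sequence $\bigl(u^n(C)\bigr)_{n\geq 0}$ is non-increasing and every term lies in $C$. An easy induction then shows that each $u^n(C)$ is a nonzero cone: the base case is the hypothesis, and for the inductive step I would apply Lemma~\ref{lemma:nonzerocone} to the cone $u^n(C)$ and the endomorphism $u$ --- its hypothesis is satisfied since $u^n(C) \subseteq C$, so no nonzero vector of $u^n(C)$ is killed by $u$ --- obtaining that $u^{n+1}(C)=u\bigl(u^n(C)\bigr)$ is again a nonzero cone. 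Lemma~\ref{descendinglemma} applied to this non-increasing sequence then yields at once that $u^\infty(C)=\bigcap_{n\geq 0}u^n(C)$ is a nonzero cone, an intersection of cones being a cone.

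For the second assertion, the inclusion $u\bigl(u^\infty(C)\bigr) \subseteq u^\infty(C)$ is immediate: if $x$ lies in every $u^n(C)$, then $u(x)$ lies in every $u^{n+1}(C)\subseteq u^n(C)$, hence in every $u^n(C)$. The content lies in the reverse inclusion. Given $y \in u^\infty(C)$ with $y\neq 0$ (the case $y=0$ being trivial, as $0=u(0)$), I would use that $y \in u^{n+1}(C)=u\bigl(u^n(C)\bigr)$ for every $n$ to pick $x_n \in u^n(C)$ with $u(x_n)=y$, noting that $x_n\neq 0$ since $u(0)=0\neq y$. The goal is then to produce a limit point of $(x_n)$ lying in $u^\infty(C)$, and the key step --- which I expect to be the main obstacle --- is showing that $(x_n)$ is bounded. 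If it were not, then along a subsequence $\|x_{n_k}\| \to \infty$, and the normalized vectors $x_{n_k}/\|x_{n_k}\|$, which lie in the compact set $S\cap C$, would subconverge to some $v \in S\cap C$; continuity of $u$ would then force $u(v)=\lim u\bigl(x_{n_k}/\|x_{n_k}\|\bigr)=\lim y/\|x_{n_k}\|=0$, contradicting $u(C\setminus\{0\})\subseteq C\setminus\{0\}$ since $v \in C\setminus\{0\}$.

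Once boundedness is in hand, I would extract a convergent subsequence $x_{n_k}\to x_\infty$; for each fixed $m$, all but finitely many terms $x_{n_k}$ lie in $u^{n_k}(C)\subseteq u^m(C)$, which is closed, so $x_\infty \in u^m(C)$ for every $m$, i.e.\ $x_\infty \in u^\infty(C)$, and $u(x_\infty)=\lim u(x_{n_k})=y$. This gives $y \in u\bigl(u^\infty(C)\bigr)$ and finishes the proof. The only genuinely delicate point is the boundedness argument; everything else is bookkeeping of the non-increasing chain together with direct invocations of Lemmas~\ref{lemma:nonzerocone} and~\ref{descendinglemma}.
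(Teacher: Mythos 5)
Your proof is correct and takes essentially the same route as the paper's: the same induction combining Lemmas \ref{lemma:nonzerocone} and \ref{descendinglemma} for the first assertion, and a compactness/subsequence extraction in $S\cap C$ for the reverse inclusion $u^\infty(C)\subseteq u(u^\infty(C))$. The only difference is organizational: the paper picks normalized preimages $x_n\in u^n(C)\cap S$ with $s_n u(x_n)=x$ and shows the scalars converge, whereas you pick exact preimages and establish boundedness by contradiction --- both variants invoke the hypothesis $u(C\setminus\{0\})\subseteq C\setminus\{0\}$ at exactly the same point, to rule out degeneration in the limit.
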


\begin{proof}
By induction, the previous lemma yields that $u^n(C)$ is a nonzero cone for every integer $n \geq 0$, and the sequence
$(u^n(C))_{n \geq 0}$ is obviously nonincreasing. Hence $u^\infty(C)$ is a nonzero cone by Lemma \ref{descendinglemma}, while clearly
$u(u^\infty(C))) \subseteq u^\infty(C)$.

Now, take a normalized vector $x \in u^\infty(C) \cap S$. For every integer $n \geq 0$ we write $x \in u(u^n(C))$ and then find some $x_n \in u^n(C) \cap S$ and some $s_n \in \R_+$ such that $s_n u(x_n)=x$. We can extract a subsequence $(x_{\varphi(n)})$ that converges to some $y \in S$.
Because $(u^n(C))_{n \geq 0}$ is non-increasing, the vector $y$ belongs to the closure $\overline{u^n(C)}$, and hence to $u^n(C)$, for every integer $n \geq 0$.
Hence $y \in u^\infty(C)$.
Finally, by taking norms we see that $s_{\varphi(n)}\, \|u(x_{\varphi(n)})\| \rightarrow \|x\|=1$, and just like in the proof of Lemma \ref{lemma:nonzerocone}
we deduce that $x=\|x\|\, u(y)=u(\|x\|\, y)$ with $\|x\|\,y \in u^\infty(C)$. Hence $u(u^\infty(C)))=u^\infty(C)$.
\end{proof}

We can now complete the proof of the weak Birkhoff theorem.
We take a nonzero nondegenerate cone $C$ of $E$ and an endomorphism $u$ of $E$ such that $u(C) \subseteq C$.
The trick is to consider the operator $v:=\id_E +u$, which clearly satisfies $v(C) \subseteq C$
and also $\forall x \in C \setminus \{0\}, \; v(x) \neq 0$ because $C$ is nondegenerate.
We apply Lemma \ref{infiniteintersectionlemma} to find that $C':=v^\infty(C)$ is a nonzero subcone of $C$ that is globally invariant under $v$,
i.e.\ $v(C')=C'$.
In particular, $C'$ is nondegenerate. Next, we observe that $C'$ is invariant under $u$, which easily follows from the commutation of $u$ with $v=u+\id$
and from the fact that $u(C) \subseteq C$.

And now for the \emph{coup de gr\^ace}: by Lemma \ref{extremalpointlemma} the cone $C'$ has an extremal vector $x$.
Then $x=v(y)=y+u(y)$ for some $y \in C'$ (obviously nonzero) by the global invariance, whereas $u(y) \in C'$ because of the invariance of $C'$ under $u$.
Then $u(y) \in \R_+ x$ and hence $u(y)$ and $y$ are linearly dependent. Therefore $y$ is an eigenvector of $u$ that belongs to $C$. QED

\begin{Rem}
By allowing the Axiom of Choice, we can simplify the proof as follows. First of all, let
$(C_i)_{i \in I}$ be a family of nonzero $u$-invariant subcones of $C$,
and assume that it is totally ordered for the inclusion of subsets.
Proceeding as in the proof of Lemma \ref{descendinglemma}, we can use the Borel-Lebesgue property for the compact space $S$
to see that the cone $\underset{i \in I}{\bigcap} C_i$ is nonzero.
Note that this cone is obviously $u$-invariant.

By Zorn's lemma, we deduce that among the $u$-invariant nonzero subcones of $C$, there is a minimal one $C'$.
Then as in the above proof we take $v:=u+\id_E$ and we use Lemma \ref{lemma:nonzerocone} to see that $v(C')$ is a nonzero subcone of $C'$ because $C$ is nondegenerate.
It is easy to observe that $v(C')$ is also $u$-invariant. By the minimality of $C'$, we recover $v(C')=C'$. Then we take an extremal vector $x$ of $C'$ and work as before to see that $x$ is an eigenvector of $u$.
\end{Rem}

\end{document}